 \newcommand{\COM}[1]{}
\renewcommand{\theequation}{\arabic{section}.\arabic{equation}}
\newtheorem{theorem}{Theorem}[section]
\newtheorem{corollary}{Corollary}[section]
\newtheorem{definition}{Definition}[section]
\newtheorem{remark}{\normalfont\scshape Remark}[section]
\newenvironment{proof}{\noindent\textsc{Proof.\/}}{}
\newcommand{\subj}[2]{\textsf{AMS 2000 subject classifications.}
Primary {#1}; Secondary {#2}.\newline}
\newcommand{\key}[1]{\textsf{Keywords and phrases.} {#1}.\newline}
\newcommand{\abb}[1]{\textsf{Abbreviated title.} {#1}.}
\newcommand{\fot}[5]{\renewcommand\thefootnote{}
\footnotetext{\parindent=0.0mm \vskip-3mm
\subj{#1}{#2}\key{#3}\abb{#4}
\newline\textsf{Date.} \date{\today}}}
\def\vsb{\hfill$\Box$}
\newcommand{\bulq}{$\bullet$\quad}
\newcommand{\be}{\begin{equation}}
\newcommand{\ee}{\end{equation}}
\newcommand{\bea}{\begin{eqnarray}}
\newcommand{\eea}{\end{eqnarray}}
\newcommand{\beaa}{\begin{eqnarray*}}
\newcommand{\eeaa}{\end{eqnarray*}}
\newcommand{\beal}{\begin{aligned}}
\newcommand{\eeal}{\end{aligned}}
\newcommand{\var}{\mathrm{Var\,}}
\newcommand{\ifff}{\ttt{$\Longleftrightarrow$}}
\newcommand{\bn}{\mathbf n}
\newcommand{\bk}{\mathbf k}
\newcommand{\sumk}{\sum^n_{k=1}}
\newcommand{\sumin}{\sum_{n=1}^\infty}
\newcommand{\ttt}[1]{\quad\mbox{ #1}\quad}
\newcommand{\asto}{\stackrel{a.s.}{\to}}
\newcommand{\pto}{\stackrel{p}{\to}}
\newcommand{\nifi}{n\to\infty}
\newcommand{\iid}{i.i.d.\ }
\newcommand{\amaa}{\frac{\alpha}{1-\alpha}}
\newcommand{\bnbb}{\frac{\beta}{1-\beta}}
\newcommand{\amaaa}{\frac{1-\alpha}{\alpha}}
\newcommand{\bnbbb}{\frac{1-\beta}{\beta}}
\newcommand{\gmgg}{\frac{\gamma}{1-\gamma}}
\newcommand{\akame}{A_k^{\alpha-1}}
\newcommand{\albme}{A_l^{\beta-1}}
\newcommand{\amkame}{A_{m-k}^{\alpha-1}}
\newcommand{\ankame}{A_{n-k}^{\alpha-1}}
\newcommand{\anlbme}{A_{n-l}^{\beta-1}}
\newcommand{\ama}{A_m^{\alpha}}
\newcommand{\ana}{A_n^{\alpha}}
\newcommand{\anb}{A_n^{\beta}}
\newcommand{\ssum}{\sum_{k,l=1}^{m,n}}
\newcommand{\ssumnull}{\sum_{k,l=0}^{m,n}}
\newcommand{\sumsum}{\sum_{m,n}\,\sum_{k,l=1}^{m,n}}
\newcommand{\sumsumnull}{\sum_{m,n}\,\sum_{k,l=0}^{m,n}}
\newcommand{\xkl}{X_{k,l}}
\newcommand{\etakl}{\eta_{k,l}}
\newcommand{\smn}{S_{m,n}}
\begin{document}
\date{}
\title{\textsf{Ces\`aro summation for random fields}}
\author{Allan Gut\\Uppsala University \and Ulrich Stadtm\"uller\\
University of Ulm}
\maketitle

\begin{abstract}\noindent
Various methods of summation for divergent series of real numbers
have been generalized to analogous results for sums of \iid random
variables. The natural extension of results corresponding to Ces\`aro
summation amounts to proving almost sure convergence of the Ces\`aro
means.  In the present paper we extend such results as well as weak
laws and results on complete convergence to random fields, more
specifically to random variables indexed by $\mathbb{Z}_+^2$, the
positive two-dimensional integer lattice points.

\end{abstract}

\fot{60F15, 60G50, 60G60, 40G05}{60F05}{Ces\`aro summation, sums of \iid
random variables, complete convergence, convergence in probability,
almost sure convergence, strong law of large numbers}{Ces\`aro
summation for random fields}

\section{Introduction}
\setcounter{equation}{0}
\markboth{A.\ Gut and U.\ Stadtm\"uller}{Ces\`aro summation for random fields}
Various methods of summation for divergent series have been
studied in the literature; see e.g. \cite{hardy, zygmund}. Several
analogous results have been proved for sums of independent, identically
distributed (i.i.d.) random variables.

The most commonly studied method is \emph{Ces\`aro} summation, which
is defined as follows: Let $\{x_n,\,n \geq 0\} $ be a sequence of real
numbers and set, for $\alpha > -1$,
\bea
\ana =
\frac{(\alpha+1)(\alpha+2)\cdots(\alpha+n)}{n!},\quad n = 1,2,\dots,
\ttt{and} A^\alpha_0 = 1.\label{ana}
\eea
The sequence $\{x_n,\, n \geq 0\} $ is said to be $(C,\alpha)$-\emph{summable}
iff
\bea
\frac{1}{\ana}\sum_{k=0}^n \ankame x_k\ttt{converges as}\nifi.
\label{ceskonv}
\eea
It is easily checked (with $A_n^{-1} = 0$ for $n \geq 1$
and $A_0^{-1}=1$) that $(C,0)$-convergence is the same as ordinary
convergence, and that $(C,1)$-convergence is the same as convergence
of the arithmetic means.

Now, let $\{X_k,\, k\geq1\}$ be \iid random variables with
partial sums $\{S_n,\,n\geq1\}$, and let $X$ be a generic random variable.
The following result is a natural probabilistic analog of (\ref{ceskonv}).

\begin{theorem}\label{asconv} Let $0 < \alpha \leq 1$. The sequence
$\{X_k,\, k\geq1\}$ is \emph{almost surely (a.s.)\/}
$(C,\alpha)$-summable iff $E|X|^{1/\alpha} < \infty$. More precisely,
\[
\frac{1}{\ana} \sum_{k=0}^n \ankame X_k \asto\mu\ttt{as}\nifi\ifff
E|X|^{1/\alpha} < \infty \mbox{ and }E\,X=\mu.
\]
\end{theorem}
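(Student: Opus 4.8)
\medskip\noindent\textsc{Proof proposal.}
I would prove the two implications separately, after collecting elementary facts about the Ces\`aro numbers. From $A_n^{\beta}\sim n^{\beta}/\Gamma(\beta+1)$, the convolution rule $A^{\beta}\ast A^{\gamma}=A^{\beta+\gamma+1}$, and the identity $\sum_{k=0}^{n}A_{n-k}^{\alpha-1}=A_n^{\alpha}$, the Ces\`aro mean $\sigma_n^{\alpha}:=(A_n^{\alpha})^{-1}\sum_{k=0}^{n}A_{n-k}^{\alpha-1}X_k=\sum_{k=0}^{n}a_{nk}X_k$ is an honest weighted average: $a_{nk}\ge0$, $\sum_k a_{nk}=1$, $\max_k a_{nk}=a_{nn}=(A_n^{\alpha})^{-1}\sim\Gamma(\alpha+1)\,n^{-\alpha}$. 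For $0<\alpha<1$ the kernels $\{A_j^{\alpha-2}\}_{j\ge0}$, $\{A_j^{-\alpha-1}\}_{j\ge0}$ are absolutely summable, $\sum_{j\ge0}A_j^{\alpha-2}=\lim_m A_m^{\alpha-1}=0$ (so $\sum_{j\ge1}|A_j^{\alpha-2}|=1$), and $A^{\alpha-1}\ast A^{-\alpha-1}=A^{-1}=\delta$. The case $\alpha=1$ is Kolmogorov's strong law, so I assume $0<\alpha<1$; since $1/\alpha\ge1$, centering preserves $E|X|^{1/\alpha}<\infty$, so in the "if" direction I may take $EX=\mu=0$ and must show $\sigma_n^{\alpha}\to0$ a.s.

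\emph{Necessity.} If $\sigma_n^{\alpha}$ converges a.s.\ then $M:=\sup_n|\sigma_n^{\alpha}|<\infty$ a.s. Applying the deconvolution identity pathwise, $X_n=\sum_{k=0}^{n}A_{n-k}^{-\alpha-1}t_k^{(\alpha)}$ with $t_k^{(\alpha)}:=A_k^{\alpha}\sigma_k^{\alpha}$, so $|X_n|\le M\sum_{j=0}^{n}|A_j^{-\alpha-1}|\,A_{n-j}^{\alpha}\le M\,A_n^{\alpha}\sum_{j\ge0}|A_j^{-\alpha-1}|$, i.e.\ $\sup_n|X_n|\,n^{-\alpha}<\infty$ a.s.\ (a Tauberian remark turning $(C,\alpha)$-boundedness into $x_n=O(n^{\alpha})$). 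As the events $\{|X_n|>\lambda n^{\alpha}\}$ are independent, the second Borel--Cantelli lemma forces $\sum_n P(|X|>\lambda n^{\alpha})<\infty$ for some $\lambda$, equivalently $E|X|^{1/\alpha}<\infty$. Given this, the limit must be $\mu=EX$: Jensen's inequality (valid since $a_{nk}\ge0$, $\sum_k a_{nk}=1$, $1/\alpha>1$) gives $\sup_n E|\sigma_n^{\alpha}|^{1/\alpha}\le E|X|^{1/\alpha}<\infty$, so $\{\sigma_n^{\alpha}\}$ is uniformly integrable, the a.s.\ limit is an $L^1$ limit, and $E\sigma_n^{\alpha}=EX$ for every $n$.

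\emph{Sufficiency.} Assume $E|X|^{1/\alpha}<\infty$, $EX=0$, and truncate $X_k=X_k'+X_k''$ with $X_k'=X_k\mathbf 1\{|X_k|\le k^{\alpha}\}$. Since $\sum_k P(X_k''\ne0)=\sum_k P(|X|^{1/\alpha}>k)<\infty$, the first Borel--Cantelli lemma makes $X_k''=0$ eventually, a.s., and as $A_{n-k}^{\alpha-1}/A_n^{\alpha}\to0$ for fixed $k$ the $X''$-part of $\sigma_n^{\alpha}$ vanishes a.s.; also $EX_k'\to0$, so by Toeplitz regularity of $(C,\alpha)$ the $(C,\alpha)$-average of $\{EX_k'\}$ tends to $0$. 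It remains to show $R_n:=\sum_{k=0}^{n}A_{n-k}^{\alpha-1}W_k=o(n^{\alpha})$ a.s., where the $W_k:=X_k'-EX_k'$ are independent, mean zero, $|W_k|\le2k^{\alpha}$, $\var W_k\le E[X^2\mathbf 1\{|X|\le k^{\alpha}\}]$, which is $\le k^{2\alpha-1}E|X|^{1/\alpha}$ for $\alpha\ge\tfrac12$ and $\le EX^2<\infty$ for $\alpha\le\tfrac12$. A direct estimate gives $\var R_n=\sum_k(A_{n-k}^{\alpha-1})^2\var W_k=o(n^{2\alpha})$, with $\var R_n/n^{2\alpha}$ summable along a geometric subsequence $n_j=\lceil\rho^{\,j}\rceil$; hence $R_{n_j}/n_j^{\alpha}\to0$ a.s.\ by Chebyshev and Borel--Cantelli. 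Passing to the full sequence needs $\max_{n_j\le n<n_{j+1}}|R_n|=o(n_j^{\alpha})$ a.s., which I would attack by Abel-summing $R_n$ against the partial sums $\Sigma_m:=\sum_{k\le m}W_k$ (the identity $\sum_{j\ge1}|A_j^{\alpha-2}|=1$ keeping the rearranged kernel of unit mass), a dyadic subdivision of each block, Kolmogorov's maximal inequality, and the Kronecker-lemma bound $\Sigma_n=o(n)$ a.s.\ (from $\sum_k\var W_k/k^2<\infty$).

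\emph{Main obstacle.} The block maximum $\max_{n_j\le n<n_{j+1}}|R_n|$ is the technical heart. Because $\sigma_n^{\alpha}$ is a genuine moving average, not a partial-sum or martingale process, no maximal inequality applies to $\{R_n\}$ directly, and the obvious bounds after an Abel summation destroy the cancellation built into the Ces\`aro weights: for instance, estimating the fresh-block contribution $\sum_{n_j<k\le n}A_{n-k}^{\alpha-1}W_k$ by $\max_m|\sum_{n_j<k\le m}W_k|$ yields order $n_j^{1/2}$, which already exceeds the target $n_j^{\alpha}$ once $\alpha\le\tfrac12$, even though that sum's variance is only $O(1)$ for $\alpha<\tfrac12$ (and $O(n_j^{2\alpha-1})$ in general), never $O(n_j)$. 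One must therefore keep the coefficient profile at every scale, which forces an iterated dyadic/multiscale decomposition (or a Rademacher--Menshov, correlation-decay argument), with the truncation level and the rate of $\{n_j\}$ tuned to $\alpha$. Carrying this bookkeeping through cleanly is the part I expect to be the real hurdle; everything else reduces to the standard truncation, Toeplitz, Borel--Cantelli and Kronecker machinery.
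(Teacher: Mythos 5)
Your necessity argument is sound: the deconvolution of $X_n$ from the Ces\`aro transforms via the absolutely summable kernel $\{A_j^{-\alpha-1}\}$, the zero--one/Borel--Cantelli step giving $\sum_n P(|X|>\lambda n^{\alpha})<\infty$, and the identification of the limit through Jensen plus uniform integrability are all correct. The reductions at the start of the sufficiency half (truncation at $k^{\alpha}$, Toeplitz regularity for the means, the subsequence estimate $R_{n_j}=o(n_j^{\alpha})$ a.s.) are also fine. But the sufficiency direction has a genuine gap, and it sits exactly where you say it does: nothing is actually proved about $\max_{n_j\le n<n_{j+1}}|R_n|$, and the tools you propose cannot close it when $\alpha\le\frac12$. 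Your own ``main obstacle'' paragraph demonstrates why: after Abel summation and Kolmogorov's maximal inequality the fresh-block contribution comes out of order $n_j^{1/2}$, which swamps the target $n_j^{\alpha}$. This is not a presentational shortfall but the mathematical content of the theorem --- the paper offers no proof of this statement and instead cites three separate papers, one per regime (Lorentz for $\frac12<\alpha<1$, where a scheme like yours can be pushed through; Chow--Lai for $0<\alpha<\frac12$; D\'eniel--Derriennic for the critical case $\alpha=\frac12$, which resisted until 1988). A proof that stops at the geometric subsequence is therefore not yet a proof.

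Two devices, both visible elsewhere in the paper, fill the hole without any multiscale bookkeeping. For $0<\alpha<\frac12$ one proves the \emph{stronger} statement of complete convergence, $\sum_n P\big(|\sum_{k=0}^n A_{n-k}^{\alpha-1}X_k|>A_n^{\alpha}\varepsilon\big)<\infty$, which under $E|X|^{1/\alpha}<\infty$ (note $1/\alpha>2$) follows from a direct summation of tail and variance terms and implies a.s.\ convergence with no maximal inequality at all. For $\frac12\le\alpha<1$ the paper's Section~5 argument (which the authors note works for $d=1$) truncates at a level $\eta_k k^{\alpha}$ with $\eta_k\downarrow0$ chosen so that $\sum_k P(|X_k|>\eta_k k^{\alpha})<\infty$; the resulting normalized summands $Z_k$ are uniformly bounded by $\eta_0<\varepsilon$, so the iterated Kahane--Hoffmann--J{\o}rgensen inequality gives $P\big(|\sum_k Z_k|>3^j\varepsilon\big)\le C_j\big(P(|\sum_k Z_k|>\varepsilon)\big)^{2^j}$ with no leading term, and a crude Markov--von Bahr--Esseen bound of order $n^{-(1/\alpha-1)}$ (or $n^{-1}\log n$ at $\alpha=\frac12$), raised to the power $2^j$ for $j$ large, becomes summable in $n$. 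Borel--Cantelli and the Kronecker lemma then finish. That single boosting step is what replaces the block-maximum analysis you correctly identify as the hurdle.
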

For $\alpha = 1$ this is, of course, the classical Kolmogorov strong law. For
proofs we refer to \cite{lorentz} ($\frac {1}{2}<\alpha<1$), \cite{cl73}
($0 <\alpha <\frac{1}{2}$) and \cite{dd88} ($\alpha=\frac{1}{2}$).

Convergence in probability for strongly integrable random variables
taking their values in real separable Banach spaces was establised in
\cite{heinkel} under the assumption of strong integrability. In the
real valued case finite mean is not necessary; for $\alpha=1$ we
obtain Feller's weak law of large numbers for which a tail condition is
both necessary and sufficient; cf.\ e.g.\ \cite{g07}, Section 6.4.1.

Next we present Theorem 2.1 of \cite{g93} where complete convergence
was obtained.
\begin{theorem}  Let $0 < \alpha \leq 1$. The sequence
$\{X_k,\, k\geq1\}$ \emph{converges completely to $\mu$},
i.e.,
\[\sumin P\big(\Big|\sum_{k=0}^n  \ankame X_k-\mu\Big|>\ana\varepsilon\big)
<\infty\ttt{for every}\varepsilon>0\,,
\]if and only if
\[\begin{cases} E|X|^{1/\alpha}<\infty,&\ttt{for}0<\alpha<\frac12,\\
 E|X|^2\log^+|X|<\infty,&\ttt{for}\alpha=\frac12,\\
 E|X|^2<\infty,&\ttt{for}\frac12<\alpha\leq 1,
\end{cases}\]
and $E\,X=\mu$.
\end{theorem}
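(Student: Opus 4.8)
\medskip\noindent\textsc{Sketch of the argument.}\quad This is Theorem~2.1 of \cite{g93}; here is how one proceeds. Write $T_n=\sum_{k=0}^n\ankame X_k$ and recall $\sum_{k=0}^n\ankame=\ana$ together with $\ana\sim n^\alpha/\Gamma(\alpha+1)$ and $A_j^{\alpha-1}\sim j^{\alpha-1}/\Gamma(\alpha)$ as $j\to\infty$, where $1=A_0^{\alpha-1}\ge A_j^{\alpha-1}>0$. First I would note that complete convergence forces, via Borel--Cantelli, $T_n/\ana\asto\mu$, so Theorem~\ref{asconv} already gives $E|X|^{1/\alpha}<\infty$ and $E\,X=\mu$; replacing $X_k$ by $X_k-\mu$, it remains to show that ``$\sumin P(|T_n|>\ana\varepsilon)<\infty$ for every $\varepsilon>0$'' is equivalent to the stated moment condition (with $\mu=0$). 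Observe that for $0<\alpha\le\tfrac12$ that condition is already contained in $E|X|^{1/\alpha}<\infty$, and that in all three cases it entails $\sigma^2:=E\,X^2<\infty$, which I shall use freely.

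The whole matter turns on the double series
\[
\Sigma(c):=\sumin\sum_{j=0}^n P\bigl(|X|>c\,n^\alpha\max(1,j^{1-\alpha})\bigr),
\]
because $n^\alpha\max(1,j^{1-\alpha})\asymp\ana/A_j^{\alpha-1}$ is exactly the magnitude of $|X_{n-j}|$ at which the single summand $A_j^{\alpha-1}X_{n-j}$ of $T_n$ reaches the level $\ana\varepsilon$. A change of variables in the inner sum --- and, for $\alpha=\tfrac12$, organising the pairs $(n,j)$ by the value of the product $nj$ and invoking the logarithmic average order of the divisor function --- shows that, for each fixed $c>0$, $\Sigma(c)\asymp E|X|^{1/\alpha}$ when $0<\alpha<\tfrac12$, $\Sigma(c)\asymp E|X|^2\log^+|X|$ when $\alpha=\tfrac12$, and $\Sigma(c)\asymp E|X|^2$ when $\tfrac12<\alpha\le1$. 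For the \emph{necessity} I would symmetrise: from $\sumin P(|T_n|>\ana\varepsilon)<\infty$ and the weak symmetrisation inequality (using that the medians of $T_n/\ana$ tend to $0$) one gets $\sumin P(|T_n^s|>\ana\varepsilon)<\infty$, where $T_n^s=\sum_{j=0}^nA_j^{\alpha-1}X_{n-j}^s$; for symmetric independent summands L\'evy's inequality gives $P(|T_n^s|>t)\ge\tfrac14 P\bigl(\max_jA_j^{\alpha-1}|X_{n-j}^s|>2t\bigr)\gtrsim\min\bigl(1,\sum_jP(A_j^{\alpha-1}|X^s|>2t)\bigr)$, and since the inner sum is $<1$ for large $n$, summing over $n$ forces $\Sigma(c\varepsilon)<\infty$ for $X^s$, i.e.\ the desired moment bound.

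For the \emph{sufficiency}, assuming $E\,X=0$ and the moment condition (so $\Sigma(c)<\infty$ and $\sigma^2<\infty$), I would fix a large constant $K$ depending on $\alpha$, put $\ell_{n,j}:=\ana\varepsilon/(KA_j^{\alpha-1})\asymp n^\alpha\max(1,j^{1-\alpha})$, and split $P(|T_n|>\ana\varepsilon)$ into the probabilities that: (i) some $|X_{n-j}|>\ell_{n,j}$ --- the $n$--sum of this is $\le\Sigma(c\varepsilon)<\infty$; (ii) at least $K$ of the $X_{n-j}$ lie in the band $(\ell_{n,j}/\log^2 n,\ell_{n,j}]$ --- this probability is $\le\frac1{K!}\bigl(\sum_jP(|X|>\ell_{n,j}/\log^2 n)\bigr)^K$, and Chebyshev's inequality bounds the inner sum by $\lesssim(\log n)^{O(1)}n^{-\min(1,2\alpha)}$, so for $K$ large the $n$--sum converges; (iii) neither of these occurs yet $|T_n|>\ana\varepsilon$ --- then the middle band contributes $\le(K-1)\ana\varepsilon/K$ in modulus, so the centred low--truncated part $\widetilde T_n:=\sum_jA_j^{\alpha-1}(Y_{n-j}-E\,Y_{n-j})$, $Y_{n-j}:=X_{n-j}I(|X_{n-j}|\le\ell_{n,j}/\log^2 n)$, must exceed a fixed fraction of $\ana$; there the centring is $o(\ana)$, the summands are bounded by $\asymp\ana/\log^2 n$, and $\var(\widetilde T_n)\le\sigma^2\sum_j(A_j^{\alpha-1})^2\ll\ana^2/\log^2 n$, so Bernstein's inequality gives a bound $2\exp(-c\log^2 n)$, which is summable. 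Adding the three pieces completes the proof.

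The delicate point is step (iii). A single truncation combined with Markov's inequality, \emph{even at arbitrarily high moments}, yields only a bound of order $1/n$ for the centred truncated sum (the ``one huge term'' contribution $\sum_j(A_j^{\alpha-1})^{2p}E|Y_{n-j}|^{2p}$ dominates the moment), so the three--band decomposition is forced; the low truncation level $\asymp\ana/(A_j^{\alpha-1}\log^2 n)$ is precisely what makes Bernstein's exponent grow, and $K$ must be taken large enough (depending on $\alpha$) to absorb the logarithmic factors from the middle band. The bookkeeping of constants is most fragile at $\alpha=\tfrac12$, where the governing series $\Sigma$ already carries an extra logarithm.
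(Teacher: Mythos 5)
The paper contains no proof of this statement: it is quoted as Theorem 2.1 of \cite{g93}, and the only kindred argument in the present paper is the one for its two-dimensional analogue, Theorem \ref{thmc}, together with the Appendix. Against that backdrop your sketch is correct and, on the sufficiency side, genuinely different. Your necessity step mirrors the paper's proof of Theorem \ref{thmc} --- symmetrize, use the L\'evy-type inequality to pass from the tail of $T_n^s$ to that of the maximal weighted summand, then bound $P(\max_j|Y_j|>t)$ below by a constant times $\min\bigl(1,\sum_jP(|Y_j|>t)\bigr)$ --- except that you stay with the summed tail probabilities directly instead of detouring through an ``i.o.''\ statement and Nerman's device; both roads end at the series $\Sigma(c)$, whose conversion into the three moment conditions is exactly the one-dimensional shadow of (\ref{dreib})--(\ref{dreic}), and your stated asymptotics for $\Sigma$ check out in all three regimes. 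For the sufficiency the paper offers only a pointer to the Appendix (which handles the truncation series alone), so your three-band decomposition --- large values controlled by $\Sigma$; at least $K$ middle-band values controlled by $\frac1{K!}\bigl(\sum_jP(\cdot)\bigr)^K\lesssim(\log n)^{O(K)}n^{-K\min(1,2\alpha)}$ with $K>\max(1,1/(2\alpha))$; and the low-truncated centred sum controlled by Bernstein with exponent of order $\log^2 n$ --- is a complete, self-contained alternative. I verified that the centring term is $o(\ana)$ and that $\var\widetilde T_n\lesssim\sigma^2\sum_j(A_j^{\alpha-1})^2=o\bigl((\ana)^2/\log^2 n\bigr)$, so all three pieces sum; your diagnosis that a single truncation plus high-moment Markov fails (the lone-large-term contribution leaves only $O(n^{-\min(1,2\alpha)}\log n)$) is also accurate. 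Two small points to tidy: at $\alpha=\tfrac12$ the stated moment condition is strictly stronger than $E|X|^{1/\alpha}<\infty$, so ``already contained in'' should be read as an implication in that one direction only; and the final desymmetrization of the moment bound from $X^s$ back to $X$ (via the weak symmetrization inequality and finiteness of a median) deserves an explicit sentence.
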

Here and in the following $\log^+x=\max\{\log x,1\}$.

The aim of the present paper is to generalize these results to random fields.
For simplicity we shall focus on random variables indexed by
$\mathbb{Z}_+^2$, leaving the corresponding results for the index set
$\mathbb{Z}_+^d$, $d>2$, to the readers.

The definition of Ces\`aro summability for arrays extends as follows:
\begin{definition} Let $\alpha,\,\beta>0$.
The array $\{x_{m,n},\, m,n \geq 0\} $ is said to be
$(C,\alpha,\beta)$-\emph{summable} iff \bea\label{ceskonv2}
\frac{1}{\ama\anb}\sumsumnull \ankame\anlbme\, x_{k,l}
\ttt{converges as}m,n\to\infty\,. \eea
\end{definition}
Our setup thus is the set $\{\xkl,\, (k,l)\in \mathbb{Z}_+^2\}$ with
partial sums $\smn$, $(m,n)\in \mathbb{Z}_+^2$. The Kolmogorov and
Marcinkiewicz-Zygmund strong law runs as follows.

\begin{theorem} \label{thmzwei}Let $0<r<2$, and suppose that
$X, \{X_{\bk},\,\bk\in \mathbb{Z}^d\}$ are \iid random variables
with partial sums $S_{\bn} =\sum_{\bk\leq \bn}X_{\bk}$, $\bn\in
\mathbb{Z}^d$. If $E|X|^r(\log^+|X|^{d-1})<\infty$, and $E\,X=0$
when $1\leq r<2$, then
\[\frac{S_{\bn}}{|\bn|^{1/r}}\asto0 \ttt{as}
\bn\to\infty.\] Conversely, if almost sure convergence holds as
stated, then $E|X|^r(\log^+|X|^{d-1})<\infty$, and $E\,X=0$ when
$1\leq r<2$.
\end{theorem}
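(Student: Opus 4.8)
The plan is to establish the direct half by the classical truncation--centering--maximal-inequality scheme, adapted to the index set $\mathbb{Z}_+^d$, and to obtain the converse by reducing to ordinary i.i.d.\ sums together with a Borel--Cantelli argument. Everything hinges on one combinatorial estimate: writing $d(n)=\#\{\bk\in\mathbb{Z}_+^d:|\bk|=n\}$, the counting function $\#\{\bk:|\bk|\le N\}=\sum_{n\le N}d(n)$ is of order $N(\log N)^{d-1}$ (the generalized Dirichlet divisor estimate). By the usual layer-cake interchange of summation this shows that $\sum_{\bk}P(|X|>|\bk|^{1/r})<\infty$ and $\sum_{\bk}|\bk|^{-2/r}E[X^2\mathbf{1}\{|X|\le|\bk|^{1/r}\}]<\infty$ are each equivalent to $E|X|^r(\log^+|X|)^{d-1}<\infty$, and this is the only point at which the precise moment hypothesis is used.

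For the direct half put $Y_{\bk}=X_{\bk}\mathbf{1}\{|X_{\bk}|\le|\bk|^{1/r}\}$. By the first series and Borel--Cantelli we have $X_{\bk}=Y_{\bk}$ for all large $\bk$ a.s., so it suffices to show $|\bn|^{-1/r}\sum_{\bk\le\bn}(Y_{\bk}-EY_{\bk})\asto0$ and $|\bn|^{-1/r}\sum_{\bk\le\bn}EY_{\bk}\to0$. The centering term is handled as in dimension one: for $1\le r<2$ one uses $EX=0$ and $E|X|<\infty$, while for $0<r<1$ one uses $P(|X|>t)=o(t^{-r})$, in both cases bounding $\sum_{\bk\le\bn}|EY_{\bk}|=o(|\bn|^{1/r})$, with the apparent extra factor $(\log|\bn|)^{d-1}$ absorbed by the $o$-estimate. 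For the stochastic part, independence makes $\{S_{\bn}\}$ a $d$-parameter martingale, and iterating the Kolmogorov/Doob maximal inequality one coordinate at a time gives $E[\max_{\bn\le\bm}|\sum_{\bk\le\bn}(Y_{\bk}-EY_{\bk})|^2]\le C_d\sum_{\bk\le\bm}\var(Y_{\bk})$. Decomposing $\mathbb{Z}_+^d$ into dyadic blocks $B_{\bi}=\prod_{j}(2^{i_j-1},2^{i_j}]$, applying Chebyshev's inequality to the partial maxima over unions of blocks, and using $|\bk|\asymp 2^{i_1+\dots+i_d}$ on $B_{\bi}$, the required a.s.\ convergence reduces to the summability of $\sum_{\bk}\var(Y_{\bk})/|\bk|^{2/r}$, which is the second series above. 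A Borel--Cantelli step over the blocks then gives convergence to $0$ along the whole field: a.s.\ only finitely many blocks are exceptional, and if $\bn\to\infty$ its block index tends to infinity as well, so $\bn$ eventually lands in a non-exceptional block.

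For the converse, assume $S_{\bn}/|\bn|^{1/r}\asto0$. The inclusion--exclusion identity $X_{\bn}=\sum_{A\subseteq\{1,\dots,d\}}(-1)^{|A|}S_{\bn-e_A}$, together with $|\bn-e_A|\sim|\bn|$ and $\bn-e_A\to\infty$, yields $X_{\bn}/|\bn|^{1/r}\asto0$. Since the $X_{\bn}$ are i.i.d., the second Borel--Cantelli lemma forces $\sum_{\bn}P(|X|>\varepsilon|\bn|^{1/r})<\infty$ for every $\varepsilon>0$, hence $E|X|^r(\log^+|X|)^{d-1}<\infty$ by the divisor estimate. Finally, for $1\le r<2$, specializing to $\bn=(n,1,\dots,1)$ turns the hypothesis into $S_n/n^{1/r}\to0$ for an ordinary i.i.d.\ sum; since $E|X|<\infty$ here, $EX=\mu\ne0$ would give $S_n/n\to\mu$ and hence $S_n/n^{1/r}=n^{1-1/r}(S_n/n)\to\infty$ when $1-1/r\ge0$, a contradiction, so $EX=0$.

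The step I expect to be the main obstacle is the direct half: setting up the $d$-parameter maximal inequality with a usable constant and, above all, organizing the dyadic-block/Borel--Cantelli argument so that it delivers convergence along all $\bn\to\infty$ rather than merely along a rectangular subsequence, which is where the multi-index case genuinely departs from the classical proof. The bookkeeping for the centering term when $0<r<1$ is the secondary point that needs care.
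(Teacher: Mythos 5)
A point of reference first: the paper does not prove Theorem 1.3 at all --- it is quoted as background, with the proof attributed to Smythe (the case $r=1$) and to Gut (1978) in general. Your proposal can therefore only be measured against the standard literature proof, and in its architecture it coincides with it: the divisor-count estimate $\#\{\bk:|\bk|\le N\}\asymp N(\log N)^{d-1}$ translating the two series conditions into $E|X|^r(\log^+|X|)^{d-1}<\infty$, truncation at $|\bk|^{1/r}$, the iterated Doob (Wichura) maximal inequality, and dyadic blocks plus Borel--Cantelli. The direct half as you outline it is sound; the two points you flag as delicate do go through, and the centering estimate for $1\le r<2$ really does consume the full log-weighted moment (split $\sum_{\bk\le\bn}E|X|\mathbf{1}\{|X|>|\bk|^{1/r}\}$ at $|\bk|=|\bn|$ and use the divisor count on the lower range).

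The converse contains one genuine error and one gloss. The error: you cannot ``specialize to $\bn=(n,1,\dots,1)$'', because the hypothesis is almost sure convergence as $\bn\to\infty$ in the sense $\min_j n_j\to\infty$, and it asserts nothing about index points with a bounded coordinate. The standard repair: having already extracted $E|X|^r(\log^+|X|)^{d-1}<\infty$ (hence $E|X|<\infty$ when $r\ge1$), apply the direct half to $X_{\bk}-E\,X$ and subtract from the hypothesis to get $|\bn|^{1-1/r}E\,X\to0$, which forces $E\,X=0$ for $1\le r<2$. The gloss: the second Borel--Cantelli step. In the field setting, almost sure convergence does \emph{not} say that only finitely many of the events $A_{\bn}=\{|X_{\bn}|>\varepsilon|\bn|^{1/r}\}$ occur (infinitely many exceptional $\bn$ with, say, $n_1=1$ are permitted), and conversely $\sum_{\bn}P(A_{\bn})=\infty$ does not by itself contradict $P(\limsup_{\bn\to\infty}A_{\bn})=0$ --- concentrate all the probability on a hyperplane of indices. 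You must use that $P(A_{\bn})$ depends only on $|\bn|$, so that divergence of the full sum forces divergence of $\sum_{\min_j n_j\ge N}P(A_{\bn})$ for every $N$, and then apply the second Borel--Cantelli lemma to each of these restricted (still independent) families to contradict convergence beyond level $N$. With these two repairs your outline matches the proof in the cited sources.
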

Here $|\bn|=\prod_{k=1}^dn_i$ and  $\bn\to\infty$ means
$\inf_{1\leq k\leq d}n_i\to\infty$, that is, all coordinates tend to infinity.
The theorem was proved in \cite{smythe} for the case $r=1$ and, generally, in
\cite{g78}.

For the analogous weak laws a finite moment of order $r$ suffices (in
fact, even a little less), since convergence in probability is
independent of the order of the index set.

The central object of investigation in the present paper is
\bea\label{ces2}
\frac1{\ama\anb}\sum_{k,l=0}^{m,n}\amkame\anlbme \xkl\,,
\eea
for which we shall establish conditions for convergence in
probability, almost sure convergence and complete convergence

Let us already at this point observe that for $\alpha=\beta=1$ the
quantity in (\ref{ces2}) reduces to that of Theorem \ref{thmzwei} with
$r=1$, that is, to the multiindex Kolmogorov strong law obtained in
\cite{smythe}.  A second thought leads us to extensions of Theorem
\ref{thmzwei} to the case when we do not normalize the partial sums with
the product of the coordinates raised to some power, but the product
of the coordinates raised to \emph{different\/} powers, viz., to, for
example ($d=2$),
\[\frac{\smn}{m^{\alpha}n^{\beta}}\,\ttt{for}0<\alpha<\beta\leq1,\]
(where thus the case $\alpha=\beta=1/r$ relates to Theorem \ref{thmzwei}).
Here we only mention that some surprises occur depending on the domain of the
parameters $\alpha$ and $\beta$. For details concerning this ``asymmetric''
Kolmogorov-Marcinkiewicz-Zygmund extension we refer to \cite{gs3}.

After some preliminaries we present our results for the different
modes of convergence mentioned above. A final appendix contains a
collection of so-called elementary but tedious calculations.

\section{Preliminaries}\label{prel}
\setcounter{equation}{0} Here we collect some facts that will be
used on and off, in general without specific reference.

\noindent
\bulq\quad The first fact we shall use is that whenever
weak forms of convergence or sums of probabilites are inyvolved we may
equivalently compute sums ``backwards'', which, in view of the \iid
assumption shows that, for example
\bea\label{potens}
\sumsumnull P(\amkame\anlbme |X_k|>\ama\anb)<\infty\iff
\sumsum P(\akame\albme|X|>\ama\anb)<\infty.\eea
In the same vein the order of the index set is irrelevant,
that is, one-dimensional results and methods remain valid.

\noindent
\bulq\quad Secondly we recall from (\ref{ana}) that $A^\alpha_0 = 1$ and that.
\[\ana =
\frac{(\alpha+1)(\alpha+2)\cdots(\alpha+n)}{n!},\quad n = 1,2,\dots,
\]
which behaves asymptotically as \bea\label{ana2} \ana \sim
\frac{n^\alpha}{\Gamma(\alpha+1)}\ttt{as}n\to\infty, \eea where
$\sim$ denotes that the limit as $\nifi$ of the ratio between the
members on either side equals 1. Combining the two relations above
tells us that \bea\label{equiv} \sumsumnull
P(\amkame\anlbme|X|>\ama\anb)<\infty\ifff \sumsum
P(k^{\alpha-1}l^{\beta-1}|X|>m^{\alpha}n^{\beta}) <\infty\,. \eea
\bulq\quad We shall also make abundant use of the fact that if
$\{a_k\in\mathbb{R}$, $k\geq1\}$, then \bea\label{mv}
a_n\to0\ttt{as}\nifi\quad\Longrightarrow\quad\frac1n\sumk
a_k\to0\ttt{as}\nifi, \eea
that if, in addition,
$w_k\in\mathbb{R}^+$, $k\geq1$, with $B_n=\sumk w_k$, $n\geq1$,
where $B_n\nearrow\infty$ as $\nifi$, then \bea\label{wmv}
\frac1{B_n}\sumk w_ka_k\to0\ttt{as}\nifi, \eea as well as integral
versions of the same.

\section{Convergence in probability}\label{p}
\setcounter{equation}{0}

We thus begin by investigating convergence in probability.
We do not aim at optimal conditions, except that, as will be seen,
the weak law does not require finiteness of the mean (whereas
the strong law does so).

\begin{theorem}\label{thmprob} Let $0<\alpha\leq\beta\leq1$ and suppose that
$\{\xkl,\,k,l\geq 0\}$ are \iid random variables. Further, set, for
$0\leq k\leq m,\,0\leq l\leq n,$
\[
Y_{k,l}^{m,n}=\amkame\anlbme\xkl I\{|\xkl|\leq \ama\anb\},\quad
S_{m,n}'=\ssumnull Y_{k,l}^{m,n}\ttt{and}\mu_{m,n}=E\,S_{m,n}'.
\]
Then
\bea\label{prob}
\frac1{\ama\anb}\Big(\ssumnull\amkame\anlbme \xkl-\mu_{m,n}\Big)
\pto0\ttt{as}m,n\to\infty
\eea
if
\bea\label{iffprob}
n P(|X|>n)\to0\ttt{as}\nifi\,.
\eea
If, in addition,
\bea\label{mumn}
\frac{\mu_{m,n}}{\ama\anb}\to0\ttt{as}m,n\to\infty,
\eea
then
\bea\label{prob2}
\frac1{\ama\anb}\ssumnull\amkame\anlbme \xkl\pto0
\ttt{as}m,n\to\infty\,.
\eea
\end{theorem}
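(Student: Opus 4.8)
\medskip\noindent\textit{Sketch of the argument.}\quad Write $L=\ama\anb$ and $c_{k,l}=\amkame\anlbme$, so that the expression in (\ref{prob}) is $L^{-1}\big(\sum_{k,l}c_{k,l}\xkl-\mu_{m,n}\big)$. I would rely on three elementary facts: the identity $\sum_{k=0}^{m}\amkame=\ama$ for the Ces\`aro numbers (and likewise in $l$), which gives $\sum_{k,l}c_{k,l}=L$; the asymptotics $\ama\sim m^{\alpha}/\Gamma(\alpha+1)$ from Section \ref{prel}, so $L\to\infty$; and the fact that for $0<\alpha\le\beta\le1$ the coefficients $A_i^{\alpha-1}$ decrease in $i$ from $A_0^{\alpha-1}=1$, whence $0<c_{k,l}\le1$ and therefore $\sum_{k,l}c_{k,l}^{2}\le\sum_{k,l}c_{k,l}=L$. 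Finally, (\ref{iffprob}) is equivalent, via the layer--cake formula, to $E\big[X^{2}I\{|X|\le t\}\big]=o(t)$, and more generally yields $E\big[X^{2}I\{s<|X|\le t\}\big]\le\varepsilon\,t$ for every $\varepsilon>0$, all $t\ge s$, and $s$ large.

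The plan is to split $\sum_{k,l}c_{k,l}\xkl-\mu_{m,n}=\big(\sum_{k,l}c_{k,l}\xkl-S_{m,n}'\big)+\big(S_{m,n}'-\mu_{m,n}\big)$ and handle the two pieces after dividing by $L$. The second piece, $S_{m,n}'-\mu_{m,n}$, is a centred sum of independent variables, so Chebyshev's inequality and independence give $P\big(|S_{m,n}'-\mu_{m,n}|>\varepsilon L\big)\le\varepsilon^{-2}L^{-2}\big(\sum_{k,l}c_{k,l}^{2}\big)\var\big(XI\{|X|\le L\}\big)$; bounding the variance by the second moment and using $\sum_{k,l}c_{k,l}^{2}\le L$ together with $E[X^{2}I\{|X|\le L\}]=o(L)$ yields $\var(S_{m,n}')=o(L^{2})$, so this piece is $o_{P}(L)$. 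This step is routine.

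The substantive part is $L^{-1}\sum_{k,l}c_{k,l}\xkl I\{|\xkl|>L\}$. Here the crude estimate $P\big(\sum_{k,l}c_{k,l}\xkl I\{|\xkl|>L\}\neq0\big)\le(m+1)(n+1)P(|X|>L)$ is, by (\ref{iffprob}), only $o\big(m^{1-\alpha}n^{1-\beta}\big)$, hence useless unless $\alpha=\beta=1$; the weights must be used. I would truncate once more, at the index--dependent level $L/c_{k,l}\ge L$, writing $\xkl I\{|\xkl|>L\}=\xkl I\{L<|\xkl|\le L/c_{k,l}\}+\xkl I\{|\xkl|>L/c_{k,l}\}$. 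The probability that any term of the second type is nonzero is at most $\sum_{k,l}P\big(|X|>L/c_{k,l}\big)\le\varepsilon L^{-1}\sum_{k,l}c_{k,l}=\varepsilon$ (eventually, since $L/c_{k,l}\ge L\to\infty$), hence negligible. The surviving part $\sum_{k,l}c_{k,l}\xkl I\{L<|\xkl|\le L/c_{k,l}\}$ consists of independent summands bounded by $L$, with variance $\le\sum_{k,l}c_{k,l}^{2}\,E\big[X^{2}I\{L<|X|\le L/c_{k,l}\}\big]\le\sum_{k,l}c_{k,l}^{2}\cdot\varepsilon\,(L/c_{k,l})=\varepsilon L\sum_{k,l}c_{k,l}=\varepsilon L^{2}$, so by Chebyshev it equals its expectation plus $o_{P}(L)$; one is then left to show that the deterministic quantity $\sum_{k,l}c_{k,l}\,E\big[XI\{L<|X|\le L/c_{k,l}\}\big]$ is $o(L)$.

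I expect that last point to be the main obstacle. The generic bound $\big|E[XI\{L<|X|\le M\}]\big|\le\varepsilon\big(1+\log(M/L)\big)$ turns this into $\varepsilon\big(L-\sum_{k,l}c_{k,l}\log c_{k,l}\big)$, and $-\sum_{k,l}c_{k,l}\log c_{k,l}$ is of order $L\log(mn)$ as soon as $\alpha$ or $\beta$ is $<1$; so the generic bound is not affordable, and one must instead use that the truncated first moments $E[XI\{|X|\le M\}]$ vary little, on $c_{k,l}$-average, between the scales $M=L$ and $M=L/c_{k,l}$ as $m,n\to\infty$ --- something genuinely stronger than $tP(|X|>t)\to0$ alone. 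Once (\ref{prob}) is established, (\ref{prob2}) is immediate: write $L^{-1}\sum_{k,l}c_{k,l}\xkl=L^{-1}\big(\sum_{k,l}c_{k,l}\xkl-\mu_{m,n}\big)+\mu_{m,n}/L$ and apply (\ref{prob}) to the first summand and (\ref{mumn}) to the second.
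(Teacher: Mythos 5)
Your first two steps are correct and run parallel to the paper's argument, which verifies the two conditions of the degenerate convergence criterion: your bound $\sum_{k,l}P(|X|>L/c_{k,l})\le\varepsilon L^{-1}\sum_{k,l}c_{k,l}=\varepsilon$ is the paper's condition (\ref{wlln1}) (proved there via the weighted-mean lemma (\ref{wmv})), and your variance estimate, using $0<c_{k,l}\le1$ and $\sum_{k,l}c_{k,l}=L$, replaces the paper's slicing computation for (\ref{wlln2}) by something cleaner. The step you single out as the main obstacle, however, is a genuine gap, and your suspicion that it needs more than (\ref{iffprob}) is right. Concretely, take the one-sided distribution $P(X>t)=1/(t\log t)$ for $t\ge e$, which satisfies (\ref{iffprob}); then $E\big(XI\{L<X\le M\}\big)=\log(\log M/\log L)+O(1/\log L)$, and since $-\log c_{k,l}\sim(1-\alpha)\log(m-k)+(1-\beta)\log(n-l)$ is comparable to $\log L$ on the bulk of the weight $c_{k,l}$, the quantity $\sum_{k,l}c_{k,l}E\big(XI\{L<|X|\le L/c_{k,l}\}\big)$ is of exact order $L$, not $o(L)$. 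No sharpening of your estimates will make it vanish.

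The term you are stuck on is precisely the difference between two centerings: the $\mu_{m,n}=\sum_{k,l}c_{k,l}E\big(XI\{|X|\le L\}\big)$ of the statement, and $\tilde\mu_{m,n}=\sum_{k,l}c_{k,l}E\big(XI\{|X|\le L/c_{k,l}\}\big)$, which is what naturally appears when one truncates the \emph{weighted} summand $c_{k,l}\xkl$ at level $L$. The paper works with the latter: in (\ref{trunc})--(\ref{truncsum}) it redefines $Y_{k,l}^{m,n}$ with the indicator $I\{k^{\alpha-1}l^{\beta-1}|\xkl|\le m^{\alpha}n^{\beta}\}$ and takes $\mu_{m,n}$ to be the mean of that truncated sum; the appeal to (\ref{potens}) and (\ref{equiv}) justifies exchanging tail sums but not centering constants. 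With the centering $\tilde\mu_{m,n}$ your own two estimates (the union bound, and Chebyshev with $E\big(X^2I\{|X|\le L/c_{k,l}\}\big)\le\varepsilon L/c_{k,l}$) already complete the proof, since the leftover term is then identically zero. So either prove (\ref{prob}) with $\tilde\mu_{m,n}$ in place of $\mu_{m,n}$ --- your machinery suffices and matches what the paper actually establishes --- or restrict to the settings of Corollaries \ref{mean} and \ref{symm}, where $E\,X=0$ gives $\big|E\big(XI\{L<|X|\le M\}\big)\big|\le E\big(|X|I\{|X|>L\}\big)=o(1)$ uniformly in $M$ and symmetry makes the term vanish outright. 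As literally stated, with truncation $I\{|\xkl|\le\ama\anb\}$, the example above makes the two centerings differ by $\Theta(\ama\anb)$, so your difficulty is pointing at a real imprecision rather than at a missing trick.
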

\begin{remark} \emph{Condition (\ref{iffprob}) is short of $E|X|<\infty$,
i.e., the theorem extends the Kolmogorov-Feller weak law \cite{k28},
\cite{k30}, and \cite{FII}, Section VII.7, to a weak law for weigthed
random fields for a class of weights decaying as powers of order
less than 1 in each direction.}
\end{remark}

\begin{corollary}\label{mean} If, in addition, $E\,X=0$,
then (\ref{prob2}) holds (and if the mean $\mu$ is not equal to zero
the limit in (\ref{prob2}) equals $\mu$).
\end{corollary}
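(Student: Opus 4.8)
The plan is to deduce the corollary from Theorem \ref{thmprob} by checking that the hypothesis $E\,X=0$ forces the normalized centering sequence $\mu_{m,n}/(\ama\anb)$ to vanish, i.e.\ that condition (\ref{mumn}) holds; together with the trivial observation that $E|X|<\infty$ implies (\ref{iffprob}), the conclusion (\ref{prob2}) is then immediate from the theorem. (When $\mu\neq0$ one writes $\xkl=\mu+(\xkl-\mu)$, applies the $\mu=0$ case to the centered field, and notes that $\frac1{\ama\anb}\ssumnull\amkame\anlbme\mu=\mu\cdot\frac{\ama\anb}{\ama\anb}=\mu$, since $\ssumnull\amkame\anlbme=\ama\anb$; so the limit is $\mu$.)

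So the real content is to show $\mu_{m,n}/(\ama\anb)\to0$. Writing $\mu_{m,n}=\ssumnull\amkame\anlbme\,E\big[\xkl I\{|\xkl|\le\ama\anb\}\big]=\big(\ssumnull\amkame\anlbme\big)\,E\big[X I\{|X|\le\ama\anb\}\big]=\ama\anb\cdot E\big[X I\{|X|\le\ama\anb\}\big]$, so that
\[
\frac{\mu_{m,n}}{\ama\anb}=E\big[X\,I\{|X|\le\ama\anb\}\big].
\]
Since $\ama\anb\to\infty$ as $m,n\to\infty$ (by (\ref{ana2}), $\ama\anb\sim m^\alpha n^\beta/(\Gamma(\alpha+1)\Gamma(\beta+1))$), and $E\,X=0$ with $E|X|<\infty$, dominated convergence gives $E[X\,I\{|X|\le\ama\anb\}]\to E\,X=0$. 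Hence (\ref{mumn}) holds, and (\ref{prob2}) follows.

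There is essentially no obstacle here: the only mildly delicate point is the identity $\ssumnull\amkame\anlbme=\ama\anb$, which is the two-dimensional Vandermonde/Cauchy-product convolution identity $\sum_{k=0}^m A_{m-k}^{\alpha-1}A_k^{1}=A_m^{\alpha}$ in the degenerate form $A_k^0=1$ (equivalently $\sum_{k=0}^m A_{m-k}^{\alpha-1}=A_m^\alpha$, which is the standard identity $\sum_{k\ge0}A_k^{p}A_{n-k}^{q}=A_n^{p+q+1}$ with $p=\alpha-1$, $q=0$), applied in each coordinate and multiplied; this is exactly the kind of ``elementary but tedious'' fact relegated to the appendix, and it makes $\mu_{m,n}$ collapse to a single one-dimensional truncated expectation, after which everything is routine.
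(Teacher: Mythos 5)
Your proof is correct, but it takes a genuinely different route from the paper's. The paper proves the corollary inside the reformulated setting of its proof of Theorem \ref{thmprob}: it works with the power-weight truncation (\ref{trunc}), in which the truncation level $m^{\alpha}n^{\beta}k^{1-\alpha}l^{1-\beta}$ varies with $(k,l)$, so the expectation does not factor out of the double sum; instead the paper uses $E\,X=0$ to flip the indicator, bounding the normalized truncated means by $\frac1{m^{\alpha}n^{\beta}}\ssum E\big(k^{\alpha-1}l^{\beta-1}|X|\,I\{k^{\alpha-1}l^{\beta-1}|X|>m^{\alpha}n^{\beta}\}\big)$, which tends to $0$ by the weighted Ces\`aro-mean fact (\ref{wmv}). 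You instead stay with the truncation exactly as written in the statement of Theorem \ref{thmprob}, where the indicator $I\{|\xkl|\le\ama\anb\}$ is the same for all $(k,l)$, and you collapse $\mu_{m,n}$ via the convolution identity $\ssumnull\amkame\anlbme=\ama\anb$ (which is indeed the standard identity $\sum_{k=0}^{m}A_{m-k}^{\alpha-1}A_k^{0}=A_m^{\alpha}$ and is easily checked) to the single one-dimensional quantity $E\big(X\,I\{|X|\le\ama\anb\}\big)$, which vanishes by dominated convergence since $E\,X=0$. Your version is arguably more faithful to the literal statement of condition (\ref{mumn}), and it is cleaner in that it avoids the weighted-mean lemma entirely; the paper's version has the advantage of being uniform with the rest of its proof of the theorem (everything is already phrased in the power-weight form via (\ref{potens}) and (\ref{equiv})) and of not requiring the exact Vandermonde identity. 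Your handling of the case $\mu\neq0$ by centering, and your observation that $E|X|<\infty$ implies the Feller condition (\ref{iffprob}), are both correct and match what the paper leaves implicit.
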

\begin{corollary}\label{symm} If, in addition, the distribution of
the summands is symmetric, then (\ref{iffprob}) alone suffices for
(\ref{prob2}) to hold.\end{corollary}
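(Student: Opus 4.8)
The plan is to deduce the corollary directly from Theorem~\ref{thmprob}. Since condition (\ref{iffprob}) is assumed, everything comes down to verifying the centering condition (\ref{mumn}); I claim that symmetry makes $\mu_{m,n}$ vanish identically, after which (\ref{prob2}) follows immediately from the theorem.

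First I would record that for $0<\alpha\le\beta\le1$ every coefficient $\amkame=A_{m-k}^{\alpha-1}$ and $\anlbme=A_{n-l}^{\beta-1}$ is a \emph{strictly positive} constant: by (\ref{ana}) it is a ratio of products of positive factors (with $A_0^{\gamma}=1$). Now fix $m,n$ and $0\le k\le m$, $0\le l\le n$. The truncation level $\ama\anb$ entering $Y_{k,l}^{m,n}$ does not depend on $(k,l)$, so the event $\{|\xkl|\le\ama\anb\}$ is symmetric in $\xkl$ about the origin; since $\xkl$ is itself symmetric, the bounded random variable $\xkl I\{|\xkl|\le\ama\anb\}$ is symmetric and hence has mean zero. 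Multiplying by the positive constant $\amkame\anlbme$ leaves the mean zero, so $E\,Y_{k,l}^{m,n}=0$. Summing the finitely many terms gives $\mu_{m,n}=E\,S_{m,n}'=\ssumnull E\,Y_{k,l}^{m,n}=0$ for every $(m,n)$, whence (\ref{mumn}) holds trivially and Theorem~\ref{thmprob} yields (\ref{prob2}).

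There is no real obstacle in this argument; the one point that deserves care is that the truncation is carried out at a level independent of the summation indices $k,l$ — this is precisely what allows each summand to be symmetrized. Had the coefficients $\amkame,\anlbme$ appeared inside the indicator, the individual terms would in general no longer be symmetric and the conclusion could fail. The result is, of course, consistent with Corollary~\ref{mean}, since a symmetric random variable with finite mean has mean zero; the gain here is that no integrability whatsoever is imposed beyond the tail condition (\ref{iffprob}).
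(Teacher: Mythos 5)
Your argument is exactly the paper's proof, which simply observes that by symmetry the truncated means $E\,Y_{k,l}^{m,n}$ all vanish, so that (\ref{mumn}) holds trivially and Theorem \ref{thmprob} delivers (\ref{prob2}). One small correction to your closing aside: even with the positive coefficients inside the indicator (as in the redefinition (\ref{trunc}) actually used in the paper's proof of the theorem), the truncation event still depends only on $|\xkl|$, so each truncated summand remains symmetric with mean zero and the conclusion would not fail.
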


\pf{Theorem \ref{prob}}
The proof of the theorem amounts to an application of the so-called
degenerate convergence criterion, see e.g. \cite{g07}, Theorem 6.3.3.

Recalling (\ref{potens}) and (\ref{equiv}) we may, equivalently,
prove the theorem for the respective powers of $k$ and $l$, viz., we
redefine the truncated means as
\bea\label{trunc}
Y_{k,l}^{m,n}=k^{\alpha-1}l^{\beta-1}\xkl
 I\{k^{\alpha-1}l^{\beta-1}\,|\xkl|\leq m^{\alpha}n^{\beta}\},
\eea
with partial sums and means as
\bea\label{truncsum}
S_{m,n}'=\ssum Y_{k,l}^{m,n}\ttt{and} \mu_{m,n}=E\,S_{m,n}'\,,
\eea
respectively.

In order to check the conditions of the degenerate convergence criterion
we thus wish to show that, if (\ref{iffprob}) is satisfied, then
\bea\label{wlln1} \ssum P(k^{\alpha-1}l^{\beta-1}|X|> m^\alpha
n^\beta)\to0 \ttt{as}m,n\to\infty\,,
\eea
and that
\bea\label{wlln2}
\frac1{m^{\alpha}n^{\beta}}\ssum\var\big(Y_{k,l}^{m,n}\big)\to0
\ttt{as}m,n\to\infty.
\eea
As for (\ref{wlln1}),
\[\ssum P(k^{\alpha-1}l^{\beta-1}|X|> m^\alpha n^\beta)
= \frac{1}{m^{\alpha}n^{\beta}}\ssum k^{\alpha-1}l^{\beta-1}
\cdot m^{\alpha}n^{\beta}k^{1-\alpha}l^{1-\beta}
P(k^{\alpha-1}l^{\beta-1}|X|> m^\alpha n^\beta),\]
which converges to 0 as $m,n\to\infty$ via (\ref{wmv}).

In order to verify (\ref{wlln2}) we apply the usual
``slicing device'' to obtain
\beaa
&&\hskip-2pc
\frac{1}{m^{2\alpha}n^{2\beta}}\ssum\var\big(Y_{k,l}^{m,n}\big)
\leq\frac1{m^{2\alpha}n^{2\beta}}\ssum  E\big(Y_{k,l}^{m,n}\big)^2\\
&&\hskip2pc\leq\frac{1}{m^{2\alpha}n^{2\beta}}
\ssum E\big(k^{2(\alpha-1)} l^{2(\beta-1)}X^2
I\{k^{\alpha-1}l^{\beta-1}|X|\leq m^\alpha n^\beta\}\big)\\
&&\hskip2pc=\frac{1}{m^{2\alpha}n^{2\beta}}
\ssum k^{2(\alpha-1)} l^{2(\beta-1)}
 \sum_{j=1}^{mn^{\beta/\alpha}}E\big(X^2
I\{(j-1)^\alpha<k^{\alpha-1}l^{\beta-1}|X|\leq j^\alpha\}\big)\\
&&\hskip2pc\leq\frac{1}{m^{2\alpha}n^{2\beta}} \ssum
\sum_{j=1}^{mn^{\beta/\alpha}}j^{2\alpha}\,
P\big((j-1)^\alpha <k^{\alpha-1}l^{\beta-1} |X|\leq j^\alpha \big)\\
&&\hskip2pc\leq\frac{C}{m^{2\alpha}n^{2\beta}} \ssum
\sum_{j=1}^{mn^{\beta/\alpha}}\Big(\sum_{i=1}^j i^{2\alpha-1}\Big)
P\big((j-1)^\alpha <k^{\alpha-1}l^{\beta-1} |X|\leq j^\alpha \big)\\
&&\hskip2pc\leq\frac{C}{m^{2\alpha}n^{2\beta}} \ssum
\sum_{i=1}^{mn^{\beta/\alpha}} i^{2\alpha-1}\,P(|X|\geq
i^\alpha k^{1-\alpha}l^{1-\beta})\\
&&\hskip2pc=\frac{C}{m^{\alpha}n^{\beta}} \ssum
k^{\alpha-1}l^{\beta-1}\Big(\frac{1}{m^{\alpha}n^{\beta}}
\sum_{i=1}^{mn^{\beta/\alpha}}
i^{\alpha-1}\big(i^\alpha k^{1-\alpha}l^{1-\beta}\,P(|X|\geq
i^\alpha k^{1-\alpha}l^{1-\beta})\big)\Big),\\
&&\hskip2pc\to0\ttt{as}m,n\to\infty\,,
\eeaa
by applying (\ref{wmv}) twice to (\ref{iffprob}).
This completes the proof of (\ref{prob}), from which (\ref{prob2}) is
immediate.\vsb

\pf{Corollary \ref{mean}} In order to conclude that also (\ref{prob2})
holds we use the usual method to show that the normalized trruncated
means tend to zero, where w.l.o.g.\ we assume that $E\,X=0$. Then
\beaa
&&\hskip-4pc\Big|\frac1{m^{\alpha}n^{\beta}}\ssum
E\big(k^{(\alpha-1)}l^{(\beta-1)}X
I\{k^{(\alpha-1)}l^{(\beta-1)}|X|\leq m^{\alpha}n^{\beta}\}\big)\Big|\\[4pt]
&=&\Big|-\frac1{m^{\alpha}n^{\beta}}\ssum
E\big(k^{(\alpha-1)}l^{(\beta-1)}X
I\{k^{(\alpha-1)}l^{(\beta-1)}|X|> m^{\alpha}n^{\beta}\}\big)\Big|\\[4pt]
&\leq& \frac1{m^{\alpha}n^{\beta}}\ssum
E\big(k^{(\alpha-1)}l^{(\beta-1)}|X|
I\{k^{(\alpha-1)}l^{(\beta-1)}|X|> m^{\alpha}n^{\beta}\}\big)
\to0\ttt{as}n,m\to\infty.
\eeaa

\pf{Corollary \ref{symm}}
Immediate, since the truncated means are (also) equal to zero.\vsb

\section{Complete convergence}\label{c}
\setcounter{equation}{0}

\begin{theorem}\label{thmc} Let $0<\alpha\leq\beta\leq1$. The field
$\{\xkl,\,k,l\geq 0\}$ \emph{converges completely to $\mu$},
i.e.,
\[\sum_{m\,n} P\big(\Big|\ssumnull \amkame\anlbme \xkl-\mu\Big|
 >\ama\anb\varepsilon\big)
<\infty\ttt{for every}\varepsilon>0\,,
\]
if and only if \goodbreak
\[\begin{cases}
E|X|^{\frac1{\alpha}},&\ttt{for}0<\alpha<1/2\,,\; \alpha<\beta\leq 1,\\[6pt]
E|X|^{\frac1{\alpha}}\log^+|X|,&\ttt{for}0<\alpha=\beta<\frac12,\\[6pt]
E|X|^2(\log^+|X|)^3,&\ttt{for}\alpha=\beta=\frac12,\\[6pt]
E|X|^2(\log^+|X|)^2,&\ttt{for}\alpha=\frac12<\beta\leq 1,\\[6pt]
E|X|^2\log^+|X|,&\ttt{for}\frac12<\alpha\leq\beta\leq 1.\end{cases}
\]
and $E\,X=\mu$.
\end{theorem}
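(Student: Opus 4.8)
The plan is to reduce the two-dimensional weighted sum to a form where the classical one-dimensional complete-convergence machinery, together with the ``backwards summation'' and asymptotic equivalences (\ref{potens})--(\ref{equiv}), can be applied, and then to track carefully the logarithmic factors produced by the double summation. As in the proof of Theorem \ref{thmprob}, I would first replace $\amkame$ and $\anlbme$ by the pure powers $k^{\alpha-1}$ and $l^{\beta-1}$, using (\ref{equiv}), so that the object of study becomes $\frac{1}{m^\alpha n^\beta}\sum_{k,l=1}^{m,n} k^{\alpha-1}l^{\beta-1}\xkl$. The sufficiency direction is the heart of the matter. I would use the standard truncation at level $m^\alpha n^\beta$ (dividing by $k^{\alpha-1}l^{\beta-1}$), show that the ``tail'' contribution $\sum_{m,n}\sum_{k,l} P(k^{\alpha-1}l^{\beta-1}|X|>m^\alpha n^\beta)$ is finite under the stated moment hypothesis, that the centering terms $\mu_{m,n}/(m^\alpha n^\beta)$ are summable (for $\alpha<1$ one exploits the gain in the truncation; for $\alpha=1$ the mean is subtracted), and that the truncated-and-centered part is controlled by a Chebyshev/Markov bound of order $2$ on the variances — this mirrors the slicing computation already carried out in the proof of Theorem \ref{thmprob}, but now summed over $(m,n)$ rather than sent to a limit.

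The technical core is the evaluation of the series $\sum_{m,n}\sum_{k,l=1}^{m,n} P(k^{\alpha-1}l^{\beta-1}|X|>m^\alpha n^\beta)$ and of the corresponding second-moment series. The strategy is to interchange the order of summation, fixing $(k,l)$ and summing over $m\geq k$, $n\geq l$; each inner double sum over $(m,n)$ of a probability of the form $P(|X|>c\,m^\alpha n^\beta)$ is comparable, by a standard ``partial summation against the distribution function'' argument (the integral versions of (\ref{mv})--(\ref{wmv})), to an expectation of $|X|^{1/\alpha}$ or $|X|^{1/\beta}$ times a power of a logarithm, the logarithm arising precisely because one is summing a two-parameter geometric-type array. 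Re-inserting the weights $k^{\alpha-1}l^{\beta-1}$ and summing over $(k,l)$ then produces the remaining logarithmic factors. The five cases in the statement correspond exactly to how many factors of $\log^+|X|$ survive: whether $\alpha<1/2$ (so that $1/\alpha>2$ and the tail term dominates) or $\alpha\geq 1/2$ (so that the second-moment term dominates and forces $E|X|^2<\infty$ as the base condition), and whether $\alpha=\beta$ (which makes the two coordinates ``resonate'' and contributes an extra $\log$) or $\alpha<\beta$. I expect the bookkeeping at the boundary $\alpha=1/2$, and in particular the appearance of $(\log^+|X|)^3$ when $\alpha=\beta=1/2$, to be the main obstacle: here the tail term contributes $E|X|^2\log^+|X|$, the variance term contributes another power or two of $\log$, and one must verify that the exponents add up rather than merely being bounded by the largest. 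I would isolate these estimates as lemmas and relegate the explicit computations to the appendix of ``elementary but tedious calculations'' announced in the introduction.

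For the necessity direction I would argue as usual: complete convergence of the normalized sums implies, via the Borel--Cantelli lemma and the \iid structure, that $P(|\amkame\anlbme\xkl|>\ama\anb\,\varepsilon\ \text{i.o.})=0$, hence $\sum_{m,n}P(k^{\alpha-1}l^{\beta-1}|X|>m^\alpha n^\beta\varepsilon)<\infty$ after backwards summation; reversing the computation that established sufficiency of the tail term then yields the required moment condition, and summability of the series also forces $S_{m,n}/(m^\alpha n^\beta)\to\mu$ along a subsequence so that $E\,X=\mu$ when a mean is needed (for $\alpha<1$ one first notes that the moment condition already guarantees integrability). The two directions thus meet at the same family of tail-series estimates, computed once and used in both directions.
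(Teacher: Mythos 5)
Your overall architecture for the sufficiency (reduce to pure powers via (\ref{equiv}), truncate at level $m^\alpha n^\beta$, bound the tail series, and read off the five cases from the logarithms produced by the double summation) matches the paper's, and the tail-series bookkeeping you describe is exactly the content of (\ref{dreia})--(\ref{dreid}) in the appendix. However, there is a concrete gap in your treatment of the truncated, centred part: a Chebyshev/Markov bound \emph{of order $2$} gives
\[
P\Big(\Big|\sum_{k,l=1}^{m,n}k^{\alpha-1}l^{\beta-1}\big(Y_{k,l}-EY_{k,l}\big)\Big|>m^\alpha n^\beta\varepsilon\Big)
\leq \frac{C}{\varepsilon^2m^{2\alpha}n^{2\beta}}\sum_{k,l=1}^{m,n}k^{2(\alpha-1)}l^{2(\beta-1)}\,E X^2 I\{\cdot\},
\]
and for $\alpha\leq\tfrac12$ the factor $\sum_{k\leq m}k^{2(\alpha-1)}$ is bounded (or only logarithmic at $\alpha=\tfrac12$), so the right-hand side is of order $m^{-2\alpha}$ in $m$ and its sum over $m$ diverges. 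The order-$2$ bound therefore cannot close the argument precisely in the regime $0<\alpha\leq\tfrac12$, which is where all of the distinctive moment conditions of the theorem ($E|X|^{1/\alpha}$, $E|X|^{1/\alpha}\log^+|X|$, $E|X|^2(\log^+|X|)^3$, $E|X|^2(\log^+|X|)^2$) live. You need either a Markov--Rosenthal bound of order $2p$ with $2p\alpha>1$, or the iterated Kahane--Hoffmann--J{\o}rgensen inequality that the paper itself deploys for exactly this purpose in the proof of Theorem \ref{thmas}.

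In the necessity direction your step ``$P(\cdot\ \mbox{i.o.})=0$, hence $\sum_{m,n}P(\cdot)<\infty$'' is not an application of the Borel--Cantelli lemma: the divergence half of Borel--Cantelli requires independence, and the events indexed by $(m,n)$ (each involving a maximum over all $k\leq m$, $l\leq n$) are strongly dependent since they share the same variables $X_{k,l}$. The paper handles this by first symmetrizing, using the L\'evy inequalities to pass from the partial sums to $\max_{k,l}\amkame\anlbme|X_{k,l}|$, and then invoking a device of Nerman (finiteness of the sum depends only on the tail probabilities, so the conclusion valid for independent copies transfers to the dependent case); the general case is then recovered by desymmetrization, with Theorem \ref{thmprob} controlling the medians. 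Your sketch compresses all of this into ``via the Borel--Cantelli lemma and the i.i.d.\ structure,'' which hides the actual difficulty; as written, that step does not go through.
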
  \goodbreak
\begin{proof} For the proof of the sufficiency we refer to the Appendix.

As for the necessity, we argue as in \cite{g92}, p.\ 59. We first suppose
that the distribution is symmetric. Now,
if complete convergence holds, then, using the fact that
\[\max_{0\leq k,l\leq m,n}\amkame\anlbme |\xkl|\leq
2 \max_{0\leq \mu,\nu \leq m,n}\Big|\sum_{k,l=0}^{\mu,\nu}
\amkame\anlbme \xkl\Big|,\] together with the L\'evy inequalities
we must have, say,
\[\sum_{m,n} P\big(\max_{0\leq k,l\leq m,n}\amkame\anlbme |\xkl|
>\ama\anb\big)<\infty\,,
\]
so that, by the first Borel-Cantelli lemma
\[P(\amkame\anlbme|\xkl|>\ama\anb\quad\mbox{i.o. for }
1\leq k,l\leq m,n\,\,; m,n \ge 1)=0.
\]
At this point we use  a device from \cite{nerman}, p.\ 379.
Namely, if the sums $\ssum \amkame\anlbme \xkl$
were independent, we would conclude that
$\sumsum P(\amkame\anlbme |X|>\ama\anb)$ were
finite. Since, however, finiteness of the sum is only a matter of
the tail probabilities, the sum is also finite in the general
case.

An application of (\ref{dreid}) now tells us that the finiteness of
the sum is equivalent to the moment conditions as given in the
statement of the theorem.

This proves the necessity in the symmetric case. The general case
follows the standard desymmetrization procedure, for which we use
Theorem \ref{thmprob} in order to take care of the asymptotics for the
normalized medians (cf.\ \cite{g78}, p.\ 472 for analogous details in the
multiindex setting of the Marcinkiewicz-Zygmund strong laws).\vsb
\end{proof}

\section{Almost sure convergence}\label{as}
\setcounter{equation}{0}
\begin{theorem}\label{thmas} Let $0 < \alpha\leq \beta \leq 1$.  The field
$\{\xkl,\,k,l\geq 0\}$ is \emph{almost surely (a.s.)\/}
$(C,\alpha,\beta)$-summable, that is,
\[
\frac{1}{\ama\anb} \ssumnull\amkame\anlbme \xkl \asto\mu\ttt{as}n,m\to\infty\]
if and only if
\[\begin{cases}
E|X|^{\frac1{\alpha}},&\ttt{for}0<\alpha < \beta\leq 1,\\[6pt]
E|X|^{\frac1{\alpha}}\log^+|X|,&\ttt{for}0<\alpha=\beta\leq1.
\end{cases}
\]
and $E\,X=\mu$.
\end{theorem}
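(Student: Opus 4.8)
The plan is to normalise, split the sufficiency into an ``easy'' range that is already contained in the complete convergence theorem and a ``hard'' range requiring a genuine almost sure argument, and handle the necessity by symmetrisation together with a maximal-inequality/Borel--Cantelli argument as in Theorem \ref{thmc}. First I would reduce to $\mu=0$: since the constant field $\xkl\equiv1$ is $(C,\alpha,\beta)$-summable to $1$ (its Cesàro mean factors into two one-dimensional Cesàro means of the constant sequence $1$), subtracting $\mu$ from every summand merely subtracts $\mu$ from the Cesàro mean $T_{m,n}:=\frac1{\ama\anb}\ssumnull\amkame\anlbme\xkl$. Next, in the range $0<\alpha<1/2$ the moment hypothesis stated here coincides exactly with the one in Theorem \ref{thmc} ($E|X|^{1/\alpha}<\infty$ when $\alpha<\beta$, $E|X|^{1/\alpha}\log^+|X|<\infty$ when $\alpha=\beta$), so the field converges completely to $0$; since $\sum_{m,n}P(|T_{m,n}|>\varepsilon)<\infty$, the set of $(m,n)$ with $|T_{m,n}|>\varepsilon$ is finite, hence bounded, and the first Borel--Cantelli lemma gives $T_{m,n}\to0$ a.s. The corner $\alpha=\beta=1$ is Theorem \ref{thmzwei} with $d=2$, $r=1$. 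What remains is the genuinely new range $1/2\le\alpha\le\beta\le1$ with $(\alpha,\beta)\neq(1,1)$, where the hypothesis is strictly weaker than complete convergence.

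In that range I would truncate at level $k^\alpha l^\beta$, i.e.\ $X'_{k,l}=\xkl I\{|\xkl|\le k^\alpha l^\beta\}$. By the moment lemma of the Appendix (cf.\ (\ref{dreid}) and (\ref{equiv})) the stated condition is equivalent to $\sum_{k,l}P(|X|>\varepsilon k^\alpha l^\beta)<\infty$ for every $\varepsilon>0$, so a.s.\ only finitely many summands are altered, and since for each fixed $(k,l)$ the weight $\amkame\anlbme/(\ama\anb)\to0$ as $m,n\to\infty$ by (\ref{ana2}), the part thrown away contributes nothing to the limit. Because $E\,X=0$ one has $E\,X'_{k,l}=-E(X I\{|X|>k^\alpha l^\beta\})\to0$, so by (\ref{wmv}) applied in each coordinate the centring terms also vanish in the Cesàro average, and it suffices to show $T'_{m,n}:=\frac1{\ama\anb}\ssumnull\amkame\anlbme(X'_{k,l}-E\,X'_{k,l})\asto0$. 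For this I would use a geometric grid $m_i=\lfloor\theta^i\rfloor$, $n_j=\lfloor\theta^j\rfloor$ with $\theta>1$: along the grid, Chebyshev and independence reduce everything to $\sum_{i,j}\var(T'_{m_i,n_j})<\infty$, where $\var(T'_{m,n})=(\ama\anb)^{-2}\ssumnull(\amkame\anlbme)^2\var(X'_{k,l})$ is precisely the type of double sum handled by the slicing device in the proof of Theorem \ref{thmprob} and in the Appendix; carrying out the summation over $(i,j)$ and invoking (\ref{wmv}) yields finiteness under the stated moments, the extra $\log^+|X|$ in the case $\alpha=\beta$ arising from the number of grid rectangles of a given area. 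To fill the gaps I would note that, since the weight sequences $\{A_j^{\alpha-1}\}$, $\{A_j^{\beta-1}\}$ are of bounded variation for $\alpha,\beta\le1$, each $T'_{m,n}$ is a bounded-variation combination of the ordinary rectangular partial sums of the centred summands, so the oscillation of $T'$ over a block $m_i\le m<m_{i+1}$, $n_j\le n<n_{j+1}$ is controlled by a two-parameter Hájek--Rényi/Cairoli maximal inequality for those partial sums, bounded again by the same variances and hence summable over $i,j$. This gives $T'_{m,n}\asto0$ and completes the sufficiency.

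For the necessity, assume $T_{m,n}\asto\mu$. I would symmetrise first (the medians being controlled by Theorem \ref{thmprob}, exactly as in the desymmetrisation step for Theorem \ref{thmc}), so take the summands symmetric and $\mu=0$. From a.s.\ convergence, $P(\sup_{m,n\ge j}|T_{m,n}|>\varepsilon)\to0$ as $j\to\infty$; combining this with the elementary bound $\max_{0\le k,l\le m,n}\amkame\anlbme|\xkl|\le2\max_{0\le\mu,\nu\le m,n}|\sum_{k,l=0}^{\mu,\nu}\amkame\anlbme\xkl|$ and the Lévy inequalities forces $\sum_{m,n}P(\max_{0\le k,l\le m,n}\amkame\anlbme|\xkl|>\ama\anb\varepsilon)<\infty$; by Borel--Cantelli the corresponding event holds for only finitely many $(m,n)$, and the device of \cite{nerman} (finiteness depending only on the tails) converts this into $\sum_{m,n}\sum_{k,l=1}^{m,n}P(\akame\albme|X|>\ama\anb\varepsilon)<\infty$, i.e.\ (specialising to $m=k$, $n=l$) $\sum_{k,l}P(|X|>\varepsilon k^\alpha l^\beta)<\infty$, which by the Appendix moment lemma is exactly the moment condition asserted. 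In particular $E|X|^{1/\alpha}<\infty$, whence $E|X|<\infty$; then, a.s.\ convergence implying convergence in probability, Theorem \ref{thmprob} and Corollary \ref{mean} identify the limit as $E\,X$, so $\mu=E\,X$.

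The step I expect to be the main obstacle is the two-parameter gap-filling in the sufficiency: proving a maximal inequality for the Cesàro-weighted rectangular sums sharp enough to dominate the block oscillations, and then summing the variances $\var(T'_{m_i,n_j})$ over the geometric grid so that the bound matches the stated moments to the letter --- the logarithmic losses being genuine and most delicate on the diagonal $\alpha=\beta$. These are the ``elementary but tedious'' computations that belong in the Appendix; the surrounding argument is structural.
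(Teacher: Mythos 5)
Your necessity argument contains a genuine error. From almost sure convergence you claim to extract
$\sum_{m,n}P\big(\max_{0\le k,l\le m,n}\amkame\anlbme|\xkl|>\ama\anb\varepsilon\big)<\infty$ via the L\'evy inequalities; but a.s.\ convergence only yields $P(\sup_{m,n\ge j}|T_{m,n}|>\varepsilon)\to0$, not summability of $P(|T_{m,n}|>\varepsilon)$ over $(m,n)$ --- that summability \emph{is} complete convergence, which for $\tfrac12<\alpha\le\beta\le1$ requires the strictly stronger moment $E|X|^2\log^+|X|<\infty$ (Theorem \ref{thmc} and (\ref{dreid})). If your derivation were valid, the necessity half would force the complete-convergence moments and thereby contradict the sufficiency half of the very theorem you are proving. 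The paper's necessity argument is different and avoids this: a.s.\ convergence of the Ces\`aro means implies (by differencing) that the single term $X_{m,n}/(m^{\alpha}n^{\beta})\asto0$, and then the \emph{second} Borel--Cantelli lemma over the independent field gives the single sum $\sum_{m,n}P(|X|>\varepsilon m^{\alpha}n^{\beta})<\infty$, which is exactly equivalent to $E|X|^{1/\alpha}<\infty$ for $\alpha<\beta$ and to $E|X|^{1/\alpha}\log^+|X|<\infty$ for $\alpha=\beta$ (the logarithm coming from the divisor count of $mn$). The L\'evy-inequality/Nerman device belongs to the necessity proof of Theorem \ref{thmc}, where one genuinely starts from a summable sequence of probabilities.

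On the sufficiency your outline is structurally reasonable (the reduction of $\alpha<1/2$ to complete convergence agrees with the paper), but for $\tfrac12\le\alpha\le\beta\le1$ you take a genuinely different route from the paper and leave its hardest step unproved. The paper truncates at $\etakl k^{\alpha}l^{\beta}$ with $\etakl\downarrow0$ chosen \`a la Heinkel so that $\sum_{k,l}P(|\xkl|>\etakl k^{\alpha}l^{\beta})<\infty$, invokes the multiindex three-series criterion and Kronecker lemma, and controls the remaining series with the iterated Kahane--Hoffman--J{\o}rgensen inequality: because each term $Z_{k,l}$ is uniformly bounded by $\eta_{00}<\varepsilon$, the first term of the KHJ bound vanishes and only a \emph{first}-moment ($E|X|^{1/\alpha}$) estimate is needed, raised to the power $2^j$ to force summability over $(m,n)$. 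Your plan instead requires variance bounds for truncated variables whose second moments need not exist directly, plus a two-parameter maximal inequality for weighted sums whose weights $\amkame\anlbme$ depend on the endpoint $(m,n)$ --- not a standard H\'ajek--R\'enyi/Cairoli situation --- and you explicitly defer exactly this block-oscillation step. As it stands that is a missing idea, not a tedious computation; the KHJ device is what lets the paper dispense with it.
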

\begin{proof} Since complete convergence always implies almost sure
convergence, the sufficiency follows immediately for the case $\alpha<1/2$.

Thus, let in the following $1/2\leq \alpha\leq \beta\leq 1$. We
first consider the symmetric case (and recall Section \ref{prel}.
In analogy with \cite{heinkel}, p.\ 538, the moment assumptions
permit us to choose an array $\{\etakl,\,k,l\geq 1\}$ of
nonincreasing reals in $(0,1)$ converging to 0, and such that
\[
\sum_{k,l=1}^\infty P(|\xkl|>\etakl k^\alpha l^\beta)<\infty.
\]
Defining
\[Y_{k,l}=\xkl I\{|\xkl|\leq \etakl k^{\alpha}l^{\beta}\}\ttt{and}
S_{m,n}'=\ssumnull Y_{k,l}^{m,n}\,,\]
it thus remains to prove the theorem for the truncated sequence.

This will be achieved via the multiindex Kolmogorov convergence
criterion (see e.g \cite{gab77}) and the multiindex Kronecker lemma
(cf.\ \cite{mo66}).  The first series has just been taken care of, the
second one vanishes since we are in the symmetric case, so it remains
to check the third series.

Toward that end, let, for $k,l\geq1$,
\[Z_{k,l}=\frac{(m-k)^{\alpha-1}(n-l)^{\beta-1}}{m^{\alpha}n^{\beta}}Y_{k,l}\,.
\]
Then
\bea\label{zbdd}
|Z_ {k,l}|\leq \frac{(m-k)^{\alpha-1}(n-l)^{\beta-1}}{m^{\alpha}n^{\beta}}
k^{\alpha} l^{\beta}\etakl\leq \etakl\leq \eta_{00}.
\eea
Now, for any $\varepsilon>0$, arbitrarily small, we may choose our
$\eta$-sequence such that $\eta_{00}<\varepsilon$, so that an application of
the (iterated) Kahane-Hoffman-J\o rgensen inequality (cf.\
\cite{g07}, Theorem 3.7.5) yields
\beaa
P\Big(\Big|\ssumnull Z_{k,l}\Big|>3^j\varepsilon\Big)
&\leq& C_j\bigg(P\Big(\Big|\ssumnull Z_{k,l}\Big|>\varepsilon\Big)\bigg)^{2^j}
\\[4pt] &\leq& C_j
\bigg(\frac{\ssumnull \big((m-k)^{(\alpha-1)}(n-l)^{\beta-1}\big)^{1/\alpha}
E|X|^{1/\alpha}}
{\big(\varepsilon m^{\alpha}n^{\beta}\big)^{1/\alpha}}\bigg)^{2^j}\\[4pt]
&=&C_j'\bigg(\frac{\ssumnull k^{(1-1/\alpha)} l^{(\beta-1)/\alpha}}
{mn^{\beta/\alpha}}\bigg)^{2^j}\\
&=&\begin{cases}
C_j''\Big(\frac1{(mn)^{\frac1{\alpha}-1}}\Big)^{2^j},
&\ttt{for}\frac12<\alpha<\beta<1,\\[6pt]
C_j''\Big(\frac{\log m}{nm}\Big)^{2^j},&\ttt{for}\frac12=\alpha<\beta<1,\\[6pt]
C_j''\Big(\frac{\log m\log n}{nm}\Big)^{2^j},
&\ttt{for}\frac12=\alpha=\beta,\end{cases}
\eeaa
(since the usual first term in the RHS vanishes in view of (\ref{zbdd})).

By choosing $j$ sufficiently large it then follows that
\[\sumsumnull P\Big(\Big|\ssumnull Z_{k,l}\Big|>3^j\varepsilon\Big)<\infty.\]
By replacing $3^j\varepsilon$ by $\varepsilon$ we have thus, due to the
arbitrariness of $\varepsilon$, shown that
\bea
P\big(|Z_ {k,l}|>\varepsilon \mbox{ i.o.}\big)=0 \ttt{for any}\varepsilon>0,
\eea
from which the desired almost sure convergence follows via the multiindex
Kronecker lemma referred to above.

This proves the sufficiency in the symmetric case from which
the general case follows by the standard desymmetrization procedure
hinted at in the proof of Theorem \ref{thmc}.

Finally, suppose that almost sure convergence holds as stated.
It then follows that
\[
\frac{A_0^{\alpha-1}A_0^{\beta-1}X_{m,n}}{\ama\anb}\asto0\ttt{as}m,n\to\infty,
\]
and, hence, also that
\[
\frac{X_{m,n}}{m^{\alpha}n^{\beta}}\asto0\ttt{as}m,n\to\infty,
\]
which, in view of \iid assumption and the second Borel-Cantelli lemma,
tells us that
\[
\sum_{m,n} P(|X|>m^{\alpha}n^{\beta})<\infty,
\]
which, in turn, is equivalent to the given moment conditions.

This concludes the proof of the theorem.\vsb
\end{proof}

\section{Concluding remarks}

We close with some comments on the present and related work.
\begin{itemize}
\item Convergence in probability has earlier been established in
\cite{heinkel} via approximation with indicator variables, and under the
assumption of finite mean. Our proof is simpler (more elementary)
and presupposes only a Feller condition.
\item As pointed out above, almost sure convergence was established in
three steps (\cite{lorentz}, \cite{cl73} and \cite{dd88}) with
different proofs.  Our proof, which also works for the case $d=1$,
takes care of the whole proof in one go (since our proof also works for the
case $\alpha<1/2$).
\item For simplicity we have confined ourselves to the case $d=2$. The
same ideas can be modified for the case $d>2$ and
$(C,\alpha_1,\alpha_2,\ldots,\alpha_d)$-summability. However, the
moment conditions then depend on the number of $\alpha$:s that are
equal to the smallest one (corresponding to $\alpha<\beta$ or
$\alpha=\beta$ in the present paper); see \cite{gs3} for
Kolmogorov-Marcinkiewicz-Zygmund laws.
\item Results on complete convergence are special cases of results on
convergence rates. In this vein our results are extendable to results
concerning
\[\sum_{m,n} n^{r-2}m^{r-2}P\big(\Big|\ssumnull \amkame\anlbme \xkl-\mu\Big|
 >\ama\anb\varepsilon\big) <\infty\ttt{for every}\varepsilon>0\]
(cf.\ \cite{g93} for the case $d=1$). For the proofs one would need i.a.\
extensions of the relevant computations in the appendix below.
\end{itemize}

\renewcommand{\theequation}
{A.\arabic{equation}}

\appendix
\section{Appendix}\label{app}
\setcounter{equation}{0} In this appendix we collect a number of
so-called elementary but tedious calculations.

First, let $0<\alpha\leq \beta<1$. Then \bea &&\sumsum
P(k^{\alpha-1}l^{\beta-1}|X|>m^{\alpha}n^{\beta})
<\infty\ifff\nonumber \\[4pt]
&&\int_1^\infty\int_1^\infty\int_1^x\int_1^y
P(|X|>u^{1-\alpha}v^{1-\beta}x^{\alpha}y^{\beta})\,dudvdxdy<\infty\ifff
\nonumber \\[4pt]
&&\hskip2cm\Big[u^{1-\alpha}x^{\alpha}=z,\qquad
v^{1-\beta}y^{\beta}=w \Big]
\nonumber\\[6pt]
&&\int_1^\infty\int_1^\infty\int_{x^{\alpha}}^x\int_{y^{\beta}}^y
\Big(\frac{z}{x}\Big)^{\amaa}\Big(\frac{w}{y}\Big)^{\bnbb}
P(|X|>zw)\,dzdwdxdy<\infty\ifff\nonumber \\[4pt]
&&\int_1^\infty\int_1^\infty
\bigg(\int_z^{z^{1/\alpha}}\frac{dx}{x^{\amaa}}\bigg)
\bigg(\int_w^{w^{1/\beta}}\frac{dy}{y^{\bnbb}}\bigg)z^{\amaa}
w^{\bnbb} P(|X|>zw)\,dzdw<\infty\,.\label{dreia} \eea
In case
$0<\alpha<\beta=1$ we have
\bea &&\sumsum P(k^{\alpha-1}|X|>m^{\alpha}n)<\infty\ifff\nonumber \\[4pt]
&&\int_1^\infty\int_1^\infty
\bigg(\int_z^{z^{1/\alpha}}\frac{dx}{x^{\amaa}}\bigg) z^{\amaa}
\,w \,P(|X|>zw)\,dzdw<\infty\,.\label{dreia1} \eea
 Next we note that
\bea\label{dreib}
\int_y^{y^{1/\gamma}}\frac{dx}{x^{\gmgg}}\sim C\,
\begin{cases} y^{\frac{1-2\gamma}{\gamma(1-\gamma)}},
&\ttt{for}0<\gamma<\frac12,\\
\log y,&\ttt{for}\gamma=\frac12,\\
y^{\frac{1-2\gamma}{1-\gamma}},&\ttt{for}\frac12<\gamma<1,
\end{cases}
\eea so that
\beaa
&&\hskip-2pc\bigg(\int_z^{z^{1/\alpha}}\frac{dx}{x^{\amaa}}\bigg)
\bigg(\int_w^{w^{1/\beta}}\frac{dy}{y^{\bnbb}}\bigg)z^{\amaa}w^{\bnbb}\\[8pt]
&&\hskip2pc\sim C\,\begin{cases}
 z^{\amaaa}w^{\bnbbb},&\ttt{for}0<\alpha,\beta<\frac12,\\
(zw)^{\amaaa},&\ttt{for}0<\alpha=\beta<\frac12,\\
zw\log z\log w=\frac{zw}{2}\big((\log zw)^2&\\
\hskip2pc-(\log z)^2-(\log w)^2\big),
&\ttt{for}\alpha=\beta=\frac12,\\
z^{\amaaa}w\log w,&\ttt{for}\alpha<\beta=\frac12,\\
z^{\amaaa}w,&\ttt{for}\alpha<\frac12<\beta\leq1,\\
zw\log z,&\ttt{for}\alpha=\frac12<\beta\leq1,\\
zw,&\ttt{for}\frac12<\alpha\leq\beta\leq1,
\end{cases}
\eeaa from which it follows that \bea
&&\hskip-2pc\int_1^\infty\int_1^\infty
\bigg(\int_z^{z^{1/\alpha}}\frac{dx}{x^{\amaa}}\bigg)
\bigg(\int_w^{w^{1/\beta}}\frac{dy}{y^{\bnbb}}\bigg)z^{\amaa}
x^{\bnbb}
P(|X|>zw)\,dzdw\nonumber\\[4pt]
&&\hskip3cm=\Big[x=zw,\qquad y=z \Big]\nonumber\\[6pt]
&&=\begin{cases} \int_1^\infty\int_1^x
x^{\bnbbb}y^{\frac1{\alpha}-\frac1{\beta}-1}
P(|X|>x)\,dydx\\[4pt]\hskip2pc
=C\int_1^\infty x^{\frac1{\alpha}-1}P(|X|>x)\,dx,
&\ttt{for}0<\alpha<\beta<\frac12,\\[6pt]
\int_1^\infty\int_1^x x^{\amaaa}\frac{1}{y}P(|X|>x)\,dydx\\[4pt]\hskip2pc
=C\int_1^\infty x^{\amaaa}\log xP(|X|>x)\,dx,
&\ttt{for}0<\alpha=\beta<\frac12,\\[6pt]
\int_1^\infty\int_1^x \big(\frac12x(\log
x)^2\frac1{y}-x\frac{(\log y)^2}{y}
\big)P(|X|>x)\,dxdy&\\[4pt]\hskip2pc=\frac16\int_1^\infty x(\log x)^3
P(|X|>x)\,dx,
&\ttt{for}\alpha=\beta=\frac12,\\[6pt]
\int_1^\infty\int_1^x xy^{\frac1{\alpha}-2}(\log x-\log y)P(|X|>x)\,dydx
&\\[4pt]\hskip2pc=C\int_1^\infty x^{\frac1{\alpha}-1}P(|X|>x)\,dx,
&\ttt{for}\alpha<\beta=\frac12,\\[6pt]
\int_1^\infty\int_1^x x y^{\frac1{\alpha}-2}P(|X|>x)\,dydx\\[4pt]\hskip2pc
=C\int_1^\infty x^{\frac1{\alpha}-1}P(|X|>x)\,dx,
&\ttt{for}\alpha<\frac12<\beta\leq 1,\\[6pt]
\int_1^\infty\int_1^x x\frac{\log y}{y}P(|X|>x)\,dydx\\[4pt]\hskip2pc
=\frac12\int_1^\infty x(\log x)^2P(|X|>x)\,dx,
&\ttt{for}\alpha=\frac12<\beta\leq 1,\\[6pt]
\int_1^\infty\int_1^x x\frac1{y}P(|X|>x)\,dydx\\[4pt]\hskip2pc
=\frac12\int_1^\infty x\log x P(|X|>x)\,dx,
&\ttt{for}\frac12<\alpha\leq\beta\leq 1.
\end{cases}\label{dreic}
\eea

Summarizing this we have shown that, for $0<\alpha\leq \beta<1$,
\bea
&&\hskip-2pc\sumsum P(\akame\albme|X|>\ama\anb)<\infty\ifff \\[4pt]
&&\hskip4pc\begin{cases}
E|X|^{\frac1{\alpha}},&\ttt{for}0<\alpha<1/2,\,\alpha<\beta\leq 1,
\\[6pt]
E|X|^{\frac1{\alpha}}\log^+|X|,&\ttt{for}0<\alpha=\beta<\frac12,\\[6pt]
E|X|^2(\log^+|X|)^3,&\ttt{for}\alpha=\beta=\frac12,\\[6pt]
E|X|^2(\log^+|X|)^2,&\ttt{for}\alpha=\frac12<\beta\leq 1,\\[6pt]
E|X|^2\log^+|X|,&\ttt{for}\frac12<\alpha\leq\beta\leq
1.\end{cases}\label{dreid} \eea

\subsection*{Acknowledgement} The work on this paper has been
supported by Kungliga Vetenskapssamh\"allet i Uppsala. Their support
is gratefully acknowledged. In addition, the second author likes to
thank his partner Allan Gut for the great hospitality during two
wonderful and stimulating weeks at the University of Uppsala.

\medskip\noindent {\small Allan Gut, Department of Mathematics,
Uppsala University, Box 480,\\ SE-751\,06 Uppsala, Sweden;\\
Email:\quad \texttt{allan.gut@math.uu.se}\\
URL:\quad \texttt{http://www.math.uu.se/\~{}allan}}
\\[4pt]
{\small Ulrich Stadtm\"uller, Ulm University, Department of Number
Theory
and Probability Theory,\\ D-89069 Ulm, Germany;\\
Email\quad \texttt{ulrich.stadtmueller@uni-ulm.de}\\
URL:\quad
\texttt{http://www.mathematik.uni-ulm.de/matheIII/members/stadtmueller/stadtmueller.html}}
\end{document}